\newcommand{\ignore}[1]{}
\title[Consensus Based Optimization algorithm and its Application to Finance ]{A Constrained Consensus Based Optimization algorithm and its Application to Finance}
\author[Bae]{Hyeong-Ohk Bae}
\address[Bae]{Department of Financial engineering, Ajou University, Suwon 16499, Republic of Korea}
\email{hobae@ajou.ac.kr}
\author[Ha]{Seung-Yeal Ha}
\address[Ha]{Department of Mathematical Sciences and Research Institute of Mathematics, Seoul National University, Seoul 08826, Republic of Korea}
\email{syha@snu.ac.kr}
\email[url]{http://www.math.snu.ac.kr/\~{}syha}
\author[Kang]{Myeongju Kang}
\address[Kang]{Department of Mathematical Sciences, Seoul National University, Seoul 08826, Republic of Korea}
\email{bear0117@snu.ac.kr}
\author[Lim]{Hyuncheul Lim}
\address[Lim]{Department of Mathematics, Chonnam National University, Gwangju, Republic of Korea}
\email{limhc@jnu.ac.kr}
\author[Min]{Chanho Min}
\address[Min]{Department of Financial engineering, Ajou University, Suwon 16499, Republic of Korea}
\email{chanhomin@ajou.ac.kr}
\author[Yoo]{Jane Yoo}
\address[Yoo]{Department of Financial engineering, Ajou University, Suwon 16499, Republic of Korea}
\email{janeyoo@ajou.ac.kr}
\newtheorem{theorem}{Theorem}[section]
\newtheorem{lemma}[theorem]{Lemma}
\newtheorem{corollary}[theorem]{Corollary}
\newtheorem{remark}[theorem]{Remark}
\newcommand{\bbr}{\mathbb R}
\newcommand{\bbe}{\mathbb E}
\newcommand{\bbw}{\mathbf{w}}
\newcommand{\bby}{\mbox{${\bf y}$}}
\newcolumntype{x}[1]{%
>{\centering\hspace{0pt}}p{#1}}%
\begin{document}

\date{\today}

\subjclass[2010]{65K10, 70F10, 90C90}
\keywords{Consensus Based Optimization, Portfolio Selection, Mean-Variance Model}
{\thanks{H.O. Bae is supported by the Basic Research Program through the National Research Foundation of Korea(NRF) funded by the Ministry of Education and Technology (NRF-2018R1D1A1A09082848). S.-Y. Ha is supported by NRF-2020R1A2C3A01003881. The work of H. Lim is supported by NRF-2019R1I1A3A03059382. The work of C. Min is supported by 
NRF-2021R1G1A1095140. The work of J. Yoo is supported by Ajou University Research Fund.}}
\begin{abstract}
In this paper, we propose a predictor-corrector type Consensus Based Optimization(CBO) algorithm on a convex feasible set.
Our proposed algorithm generalizes the CBO algorithm in \cite{H-J-K} to tackle  a constrained optimization problem for the global minima 
of the non-convex function defined on a convex domain. 
As a practical application of the proposed algorithm, 
we study the portfolio optimization problem in finance.  
In this application, we introduce an objective function to choose the optimal weight 
on each asset in a asset-bundle which yields the maximal expected returns given a certain level of risks. 
Simulation results show that our proposed predictor-corrector type model is successful in finding the optimal value.   
\end{abstract}

\maketitle

\section{Introduction}
\setcounter{equation}{0}
Population-based stochastic optimization algorithms have been extensively used to solve large-scale optimization problems {{arising 
from collective behaviors in nature and human society \cite{A-B-F, C-H-L, G-C-H-Z, P-K-B}, 
and  these types of metaheuristic algorithms are preferred over the gradient-based type algorithms} \cite{Be}. It is because the gradient-based type algorithm requires the computation of gradients of objective functions so that it cannot be used for non-differentiable objective functions which appear in machine learning problems \cite{Bo}. In contrast, population-based algorithms can be used for solving data-driven optimization problems { and problems with non-smooth objective functions} \cite{C-J-L-Z,C-C-T-T,T-P-B-S}.

In this work, we develop a population-based searching algorithm to solve a problem with constraints 
based on the consensus-based searching method namely, CBO algorithm, which is the first-order method
so that intrinsically { it is more straightforward than the second-order particle swarm optimization (PSO) algorithm \cite{G-P}.
The CBO is also simpler and easier to be applied in real problems} than meta-heuristic ones \cite{L-A, Ya}. 
Despite its simplicity, the CBO is robust in solving high dimensional non-convex, non-regular optimization problems 
including artificial intelligence applications \cite{H-J-K}. 
Several variants of the CBO algorithm are further addressed in a series of recent works \cite{C-J-L-Z, C-C-T-T, P-T-T-M, T-P-B-S}.

In \cite{H-J-K2, H-J-K}, the authors dealt with a constrained optimization problem finding a global minima of a twice continuously differentiable function defined 
on a whole Euclidean domain. We suggest a more generalized algorithm for solving such a problem, whose admissible set is closed and convex. 
{ Specifically, we develop a predictor-corrector type CBO algorithm based on the discrete dynamical systems in \cite{H-J-K}. For each iteration, 
we apply their discrete method to find the optimal of an objective function (\textit{prediction step}) 
and then project the predicted value onto an admissible set (\textit{correction step}). }
After introducing this predictor-corrector type algorithm, 
we provide {sufficient conditions on system parameters and initial data}  for the convergence to a global optima 
of a continuously differentiable function, which is defined on a closed and convex feasible set. 

We apply our predictor-corrector type algorithm to the optimal portfolio selection problem in finance. 
{To develop suitable optimization algorithms for tackling large instances of the constrained optimal portfolio selection problem, 
several researchers and practitioners have suggested meta-heuristic methods \cite{Ya, Y-D} including machine learning techniques,
(hierarchical) clustering \cite{N}, genetic algorithm (GA) \cite{Ho}, and PSO algorithms \cite{E-K} to find high-quality solutions
in a reasonable amount of time. }
Motivated by the aforementioned results of consensus-based optimizing algorithms, we provide a sufficient framework of 
our predictor-corrector type algorithm 
to solve the constrained inter-temporal portfolio selection problem, in which an agent maximizes portfolio's return given a certain level of risks. 
To the best of our knowledge, this is the first study on the CBO algorithm applied to a financial problem. 

\vspace{0.2cm}
The rest of this paper is organized as follows. 
{In Section \ref{sec:2}, we propose a predictor-corrector CBO algorithm on the convex feasible set.}  
In Section \ref{sec:3}, we discuss mathematical properties of the proposed predictor-corrector type algorithm. 
Finally, Section \ref{sec:4} is devoted to financial preliminaries, and numerical simulations of the proposed CBO algorithm 
in optimal portfolio selection problem.

\vspace{.5cm}
\noindent Notation: {Throughout the paper, $N$ denotes the number of particles(agents) and we set $[N] := \{1, 2, \cdots, N\}$. We also identify $i \in [N]$ with the agent $ i $. }

\section{CBO Algorithm on a convex feasible set} \label{sec:2}
\setcounter{equation}{0}

Let $\bbw_n^k = (w_n^{k,1}, \cdots, w_n^{k,d}) \in \bbr^d$ be the value of the $k$-th sample path at $ n $-th step. We look for a global optimizer $\bbw^* \in \bbr^d$ for a given continuous objective function $L$ defined on a nonempty closed convex feasible set $\mathcal S$:
\begin{align*}
{  L^* := L(\bf w^*) = \underset{\bbw\in\mathcal S}{\min} \ L(\bbw).}
\end{align*}

Given $L$, the CBO model is proposed to find a global minimizer $\bbw^{*}$ in \cite{H-J-K2}. 
Note that the approximate solution $\bbw_n^k$ to the algorithm in \cite{H-J-K2} is generally not in the feasible set $ \mathcal{S}$. To enforce the approximate solution in $\mathcal S$, { we propose a predictor-corrector type CBO algorithm by first forwarding Euler scheme(\textit{prediction step}), and then projecting the predicted value to $\mathcal{S}$ in each iteration(\textit{correction step})}:
	\begin{equation} \label{dCBO2}
		\begin{cases}
			\displaystyle \hat{\bbw}^i_{n+1} = \bbw_n^i -\lambda h({\bbw}^i_n - {\bar {\bbw}}_n)
			+\sigma\sqrt h ({\bbw}_n^i- {\bar {\bbw}}_n)\odot \eta_n , \quad n \geq 0,~~ i \in [N], \vspace{.3cm}\\
			\displaystyle {\bbw}_{n+1}^i = {\mathbb P}_\mathcal S [\hat{\bbw}_{n+1}^i], \vspace{.3cm}\\
			\displaystyle {\bar {\bbw}}_n =(\bar{w}_n^{1}, \cdots, \bar{w}_n^{d}) 
			:= \frac{\sum_{l=1}^{N} {\bbw}^l_n e^{-\beta L({\bbw}^l_n)}}{\sum_{l=1}^{N} e^{-\beta L({\bbw}^l_n)}},  \hspace{0.5cm} \bbw_n^i \big|_{n =0} = \bbw^{i}_0,
		\end{cases}
	\end{equation}
where $\lambda, \sigma$ and $\beta$ denote the drift rate, noise intensity and positive hyperparameter, respectively. In statistical physics, $\beta$ often corresponds to the reciprocal of temperature(coldness). Here, $\eta_n:=[\eta_n^1,\cdots,\eta_n^d]^\top$, and $A\odot B$ represents the Hadamard product, 
i.e. $A\odot B=(a_{ij}b_{ij})$ for matrices $A=(a_{ij})$ and $B=(b_{ij})$ of the same size. 
We assume that the one-dimensional random variable $\eta_n^l$ is i.i.d. with its zero mean and covariance such that  
\begin{align*}
\bbe[\eta_n^l] &= 0 \quad \mbox{for $l = 1, \cdots, d$},  \\
 \bbe[\eta_n^{l_1} \eta_k^{l_2}] &= \delta_{l_1 l_2}\delta_{nk}, \quad \mbox{for} \quad 1 \leq  l_1, l_2 \leq d, \quad 0\leq n,k,
\end{align*} 
	and ${\mathbb P}_{\mathcal{S}} $ is a projection onto $ \mathcal{S} $:
\begin{align*}
{\mathbb P}_{\mathcal S}[\hat {\bbw}] := \arg\min_{\bbw\in \mathcal{S}} \|\bbw-\hat{\bbw}\|^2.
\end{align*}
Here, $\| \cdot \|$ denotes the standard $\ell_2$-norm in $\bbr^d$. Note that $ \bbw_k$,  $k\le n$, is independent of $ \eta_n^{l} $.



\section{Discrete CBO Model on Convex Domain} \label{sec:3}
\setcounter{equation}{0}
In this section, we discuss our main results. In relation with the correction step in \eqref{dCBO2}, we first recall the following elementary lemma. 
\begin{lemma}\label{proj}
	{Let $\mathcal{S}$ be a nonempty closed and convex domain. Then $ \mathbb{P}_{\mathcal{S}}$ satisfies a contraction property: }
\begin{align*}
\| {\mathbb P}_{\mathcal{S}}(\hat \bbw)- {\mathbb P}_{\mathcal{S}}(\hat \bby)\| \leq \|\hat \bbw-\hat \bby\|, \qquad\forall~\hat{\bbw},\hat{\bby}.
\end{align*}
\end{lemma}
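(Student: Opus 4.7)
The statement is the standard nonexpansiveness of the metric projection onto a nonempty closed convex set in a Hilbert (here Euclidean) space. My plan is to derive it from the first-order optimality (variational inequality) characterization of the projection, then combine two copies of that inequality and finish with Cauchy--Schwarz.

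First I would note that, since $\mathcal{S}$ is nonempty, closed, and convex and the map $\mathbf{w}\mapsto\|\mathbf{w}-\hat{\mathbf{w}}\|^2$ is continuous, coercive, and strictly convex on $\mathbb{R}^d$, the minimizer $\mathbb{P}_{\mathcal{S}}(\hat{\mathbf{w}})$ exists and is unique, so $\mathbb{P}_{\mathcal{S}}$ is well-defined. Then I would prove the following variational characterization: for each $\hat{\mathbf{w}}\in\mathbb{R}^d$,
\begin{equation*}
\langle \hat{\mathbf{w}}-\mathbb{P}_{\mathcal{S}}(\hat{\mathbf{w}}),\, \mathbf{z}-\mathbb{P}_{\mathcal{S}}(\hat{\mathbf{w}})\rangle \le 0 \qquad \text{for all } \mathbf{z}\in\mathcal{S}.
\end{equation*}
To see this, fix $\mathbf{z}\in\mathcal{S}$ and set $\mathbf{w}_t := (1-t)\mathbb{P}_{\mathcal{S}}(\hat{\mathbf{w}})+t\mathbf{z}$, which lies in $\mathcal{S}$ for $t\in[0,1]$ by convexity. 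The scalar function $\varphi(t) := \|\mathbf{w}_t-\hat{\mathbf{w}}\|^2$ attains its minimum on $[0,1]$ at $t=0$ by the defining property of the projection, hence $\varphi'(0)\ge 0$, which after a direct computation yields the inequality.

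Applying this once with $\hat{\mathbf{w}}$ and test point $\mathbf{z}=\mathbb{P}_{\mathcal{S}}(\hat{\mathbf{y}})$, and once with $\hat{\mathbf{y}}$ and test point $\mathbf{z}=\mathbb{P}_{\mathcal{S}}(\hat{\mathbf{w}})$, I would add the two resulting inequalities. After rearranging, the cross terms combine to give the firm-nonexpansiveness bound
\begin{equation*}
\|\mathbb{P}_{\mathcal{S}}(\hat{\mathbf{w}})-\mathbb{P}_{\mathcal{S}}(\hat{\mathbf{y}})\|^2 \le \langle \hat{\mathbf{w}}-\hat{\mathbf{y}},\, \mathbb{P}_{\mathcal{S}}(\hat{\mathbf{w}})-\mathbb{P}_{\mathcal{S}}(\hat{\mathbf{y}})\rangle.
\end{equation*}
The Cauchy--Schwarz inequality applied to the right-hand side, followed by division by $\|\mathbb{P}_{\mathcal{S}}(\hat{\mathbf{w}})-\mathbb{P}_{\mathcal{S}}(\hat{\mathbf{y}})\|$ (the case when this quantity is zero is trivial), yields the claimed contraction estimate.

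There is no real obstacle here, since every step is standard; the only point that requires a small amount of care is the derivation of the variational inequality, because it is the single place where convexity of $\mathcal{S}$ (as opposed to mere closedness) is used. Everything else is algebraic rearrangement plus Cauchy--Schwarz.
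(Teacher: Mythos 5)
Your proof is correct and is exactly the standard argument (variational inequality characterization of the projection, summing the two inequalities, then Cauchy--Schwarz) that the paper has in mind when it omits the proof as ``standard and elementary.'' No gaps; your write-up simply supplies the details the authors chose to skip.
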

\begin{proof} Since the proof is rather standard and elementary, we omit its proof here. 
\end{proof}
Now, we state our first main result on the emergence of stochastic consensus.
\begin{theorem}
\emph{(Emergence of global consensus)} \label{T3.2}
{ Suppose the system parameters satisfy}
		\begin{align}\label{L3.3b}
		\sigma > 0, \quad   2\lambda > \sigma^2 \quad \mbox{and} \quad 0 <  h < \frac{2\lambda - \sigma^2}{\lambda^{2}}.
		\end{align}	
Then, for a solution $\{\bbw_n^i \}$ to  \eqref{dCBO2}, one has 
\[ \mathbb E \big[ \|\bbw_n^i-\bbw_n^j \|^2 \big] \leq e^{-nhm} \bbe \big[ \|\bbw^i_0 - \bbw^j_0 \|^2 \big], \quad n \in {\mathbb N}, \]
where $ m:= (2\lambda - \lambda^2 h -\sigma^2)$.
\end{theorem}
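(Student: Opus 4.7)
The plan is to combine the contraction property of $\mathbb{P}_{\mathcal{S}}$ from Lemma \ref{proj} with a one-step variance computation on the predictor, and then iterate.

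First I would exploit the crucial cancellation in the predictor step: since the term $-\lambda h\bar\bbw_n + \sigma\sqrt h\, \bar\bbw_n\odot\eta_n$ is common to every agent, subtracting the equations for $i$ and $j$ kills the dependence on $\bar\bbw_n$ entirely, leaving
\begin{equation*}
\hat\bbw_{n+1}^i - \hat\bbw_{n+1}^j = (1-\lambda h)(\bbw_n^i-\bbw_n^j) + \sigma\sqrt h\,(\bbw_n^i-\bbw_n^j)\odot\eta_n.
\end{equation*}
Taking the squared $\ell_2$-norm and expanding gives one deterministic term $(1-\lambda h)^2\|\bbw_n^i-\bbw_n^j\|^2$, one cross term linear in $\eta_n$, and one quadratic term $\sigma^2 h\,\|(\bbw_n^i-\bbw_n^j)\odot\eta_n\|^2$.

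Next I would take expectations, using that $\bbw_n^i,\bbw_n^j$ are independent of $\eta_n$ together with the assumptions $\mathbb{E}[\eta_n^l]=0$ and $\mathbb{E}[\eta_n^{l_1}\eta_n^{l_2}]=\delta_{l_1 l_2}$. The cross term vanishes componentwise, and the Hadamard-squared term reduces to $\sum_l \mathbb{E}[(w_n^{i,l}-w_n^{j,l})^2]=\mathbb{E}[\|\bbw_n^i-\bbw_n^j\|^2]$. Hence
\begin{equation*}
\mathbb{E}\big[\|\hat\bbw_{n+1}^i-\hat\bbw_{n+1}^j\|^2\big] = \big((1-\lambda h)^2+\sigma^2 h\big)\,\mathbb{E}\big[\|\bbw_n^i-\bbw_n^j\|^2\big].
\end{equation*}
Now Lemma \ref{proj} applied to the corrector step $\bbw_{n+1}^\ast=\mathbb{P}_{\mathcal S}[\hat\bbw_{n+1}^\ast]$ yields the pointwise bound $\|\bbw_{n+1}^i-\bbw_{n+1}^j\|\le \|\hat\bbw_{n+1}^i-\hat\bbw_{n+1}^j\|$, so the same contraction factor is inherited after the projection.

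Finally I would rewrite the factor as $(1-\lambda h)^2+\sigma^2 h = 1-h(2\lambda-\lambda^2 h-\sigma^2)=1-hm$. The parameter conditions in \eqref{L3.3b} guarantee $m>0$ and $hm<1$, so the elementary inequality $1-x\le e^{-x}$ gives $1-hm\le e^{-hm}$. Iterating the one-step bound $n$ times produces the claimed exponential decay. The only mildly delicate point is the second step, namely making the Hadamard-noise bookkeeping precise and invoking the correct measurability/independence of $\bbw_n^i,\bbw_n^j$ with respect to $\eta_n$; everything else is routine algebra and Lemma \ref{proj}.
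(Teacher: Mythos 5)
Your proposal is correct and follows essentially the same route as the paper: cancel the common $\bar\bbw_n$ terms in the difference of predictor steps, use independence of $\bbw_n^i-\bbw_n^j$ from $\eta_n$ together with the moment assumptions to get the one-step factor $(1-\lambda h)^2+\sigma^2 h = 1-hm$, pass the bound through the projection via Lemma \ref{proj}, and conclude with $1-x\le e^{-x}$ and iteration. The only cosmetic difference is that you work with the full vectors while the paper argues componentwise before summing over $l$; your extra remark that $hm<1$ is true but not actually needed for the estimate.
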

\begin{proof}
	From \eqref{dCBO2},  
\begin{align*}
	\begin{aligned} 
		\hat{w}^{i,l}_{n} - \hat{w}^{j,l}_{n}  &= w^{i,l}_n - w^{j,l}_n -\lambda h ( w^{i,l}_n - w^{j,l}_n) - \sigma \sqrt{h} (w^{i,l}_n - w^{j,l}_n) \eta_n^l \\
		&= \Big(1 -\lambda h - \sigma \sqrt{h} \eta_n^l  \Big) (w^{i,l}_n - w^{j,l}_n).
	\end{aligned}
\end{align*}
\noindent This implies
\begin{align*}
(\hat{w}^{i,l}_{n} - \hat{w}^{j,l}_{n}  )^2
	= ( 1 -\lambda h - \sigma \sqrt{h} \eta_n^l )^2 ( w^{i,l}_n - w^{j,l}_n )^2.
	\end{align*}
Now, we take expectation from both sides to get
\begin{equation}\label{eq3.2}
\begin{aligned}
	\mathbb{E}[(\hat{w}^{i,l}_{n} - \hat{w}^{j,l}_{n}  )^2]
	&= \mathbb{E}[( 1 -\lambda h - \sigma \sqrt{h} \eta_n^l )^2 ( w^{i,l}_n - w^{j,l}_n )^2]\\
	&=\mathbb{E}[( 1 -\lambda h - \sigma \sqrt{h} \eta_n^l )^2]\mathbb{E}[ ( w^{i,l}_n - w^{j,l}_n )^2]\\
	&=(1 -2\lambda h +\lambda^2 h^2+\sigma^2h)\mathbb{E}[ ( w^{i,l}_n - w^{j,l}_n )^2],
\end{aligned}
\end{equation}
where we use the independence of $ \eta_n^{l} $ and $ (w_n^{i,l}-w_{n}^{j,l}) $. We sum up over $ l $ in \eqref{eq3.2}, we have 
\begin{equation*}
	\mathbb{E} \big[ \|\hat{\bbw}_n^{i}-\hat{\bbw}_n^{j}\|^2 \big] =	(1-h\underbrace{(2\lambda-\lambda^2h-\sigma^{2})}_{=: m(\lambda,h, \sigma)})\mathbb{E} \big[ \|{\bbw}_n^{i}-{\bbw}_n^{j}\|^2 \big].
\end{equation*}
{Next, we use Lemma \eqref{proj} and inequality $ 1+x\leq e^x  $} to obtain 
\begin{equation*}
	\mathbb E \big[ \|\bbw_{n+1}^i-\bbw_{n+1}^j \|^2 \big] \leq e^{-h(2\lambda - \lambda^2 h -\sigma^2)} \bbe \big[ \|\bbw^i_n - \bbw^j_n \|^2 \big].
\end{equation*}
This yields
\begin{equation*} 
\mathbb E \big[ \|\bbw_n^i-\bbw_n^j \|^2 \big] \leq e^{-nh(2\lambda - \lambda^2 h -\sigma^2)} \bbe \big[ \|\bbw^i_0 - \bbw^j_0 \|^2 \big], \quad n \in {\mathbb N}.
\end{equation*}

\end{proof}

\begin{corollary} \label{C3.3}
{Under the same setting as in Theorem 3.2 together with the following extra assumptions on the initial distribution of $ \bbw_0^i $:
\begin{equation} \label{New-1}
	\quad \bbw_0^i: i.i.d., \quad \bbw_0^i \sim \bbw_{in},
\end{equation} one has 
\begin{align*}
& \mathbb E \big[ \|\bbw_n^i-\bar \bbw_n \|^2 \big] \leq 2\left(\frac{d-1}{d}\right)^2e^{-nhm} \left(\mathbb E\Big[ \|\bbw_{in}\|^2\Big]-\mathbb E\Big[\bbw_{in}\Big]\cdot \mathbb E\Big[\bbw_{in}\Big]\right), \\
& \mathbb E\big[ \|\bbw_n^i-\bar \bbw_n \|\big] 
\leq \Big(\frac{d-1}{d}\Big)e^{-\frac{nhm}{2}}\sqrt{ 2\left(\mathbb E\Big[ \|\bbw_{in}\|^2\Big]-\mathbb E\Big[\bbw_{in}\Big]\cdot \mathbb E\Big[\bbw_{in}\Big]\right)}.
\end{align*}
}
\end{corollary}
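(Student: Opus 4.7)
The plan is to reduce the single-particle deviation $\|\bbw_n^i - \bar\bbw_n\|$ to pairwise differences $\|\bbw_n^i - \bbw_n^l\|$ on which Theorem \ref{T3.2} is already decisive, and then evaluate the remaining initial-time expectation using the i.i.d.\ hypothesis \eqref{New-1}. First, since $\bar\bbw_n$ is a convex combination of the particles, $\bar\bbw_n = \sum_l p_l^n \bbw_n^l$ with nonnegative Gibbs weights $p_l^n := e^{-\beta L(\bbw_n^l)}/\sum_k e^{-\beta L(\bbw_n^k)}$ summing to one, I would rewrite
\[
\bbw_n^i - \bar\bbw_n \;=\; \sum_{l \neq i} p_l^n (\bbw_n^i - \bbw_n^l),
\]
and apply Cauchy--Schwarz (equivalently, weighted Jensen applied to the convex function $x \mapsto \|x\|^2$) to obtain
\[
\|\bbw_n^i - \bar\bbw_n\|^2 \;\leq\; \Bigl(\sum_{l \neq i} p_l^n\Bigr) \sum_{l \neq i} p_l^n\,\|\bbw_n^i - \bbw_n^l\|^2.
\]

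Next, I would take expectation, handle the random weights $p_l^n$ (using $p_l^n \leq 1$ and $\sum_{l \neq i} p_l^n \leq 1$, together with the exchangeability of the particles, which holds since \eqref{New-1} is i.i.d.\ and the dynamics \eqref{dCBO2} is symmetric in the particle index), and then apply Theorem \ref{T3.2} term-by-term:
\[
\mathbb E\big[\|\bbw_n^i - \bbw_n^l\|^2\big] \;\leq\; e^{-nhm}\,\mathbb E\big[\|\bbw_0^i - \bbw_0^l\|^2\big].
\]
The i.i.d.\ hypothesis \eqref{New-1} then evaluates, for any $i \neq l$,
\[
\mathbb E\big[\|\bbw_0^i - \bbw_0^l\|^2\big] \;=\; 2\mathbb E\big[\|\bbw_{in}\|^2\big] - 2\,\mathbb E[\bbw_{in}]\cdot\mathbb E[\bbw_{in}],
\]
which reproduces precisely the parenthesized variance factor in the statement; the stated squared prefactor then emerges from the Cauchy--Schwarz combinatorics of the first step combined with the weight identity $\sum_l p_l^n = 1$. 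The second inequality of the corollary follows from the first by Jensen's inequality $\mathbb E[X] \leq (\mathbb E[X^2])^{1/2}$ applied to $X = \|\bbw_n^i - \bar\bbw_n\| \geq 0$, followed by taking a square root.

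The main obstacle is the bookkeeping of the Gibbs weights $p_l^n$: because they depend on the current random positions they do not factor cleanly out of the expectation, and the naive bound $p_l^n \leq 1$ loses a full factor of $N$ relative to the sharp prefactor claimed. Recovering the quadratic prefactor requires using $\sum_l p_l^n = 1$ in tandem with the pairwise Cauchy--Schwarz inequality above and, crucially, the exchangeability of the particle trajectories provided by \eqref{New-1} together with the symmetric form of \eqref{dCBO2}.
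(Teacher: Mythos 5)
Your skeleton is the same as the paper's (reduce $\bbw_n^i-\bar\bbw_n$ to pairwise differences, apply Theorem \ref{T3.2} termwise, evaluate the initial second moment via the i.i.d.\ assumption, and deduce the first-moment bound by Jensen), but the one step where you differ from the paper is exactly where your argument has a gap — and it is a gap you flag yourself without closing. After the weighted Cauchy--Schwarz step you must take the expectation of $\sum_{l\neq i} p_l^n\,\|\bbw_n^i-\bbw_n^l\|^2$, and the Gibbs weights $p_l^n$ are functions of the current positions, hence correlated with the pairwise distances. Exchangeability gives only $\mathbb E[p_l^n]=1/N$; it does \emph{not} give $\mathbb E\big[p_l^n\|\bbw_n^i-\bbw_n^l\|^2\big]\le \tfrac1N\,\mathbb E\big[\|\bbw_n^i-\bbw_n^l\|^2\big]$, and one should in fact expect positive correlation (a particle sitting far from the bulk at a low value of $L$ carries both a large weight and a large distance). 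The only bound you actually justify, $p_l^n\le 1$, produces a prefactor of order the number of particles, e.g.\ $\mathbb E\big[\|\bbw_n^i-\bar\bbw_n\|^2\big]\le 2(N-1)\,e^{-nhm}\big(\mathbb E[\|\bbw_{in}\|^2]-\|\mathbb E[\bbw_{in}]\|^2\big)$, not the claimed constant $2\left(\tfrac{d-1}{d}\right)^2<2$. The sentence asserting that the sharp prefactor ``emerges'' from $\sum_l p_l^n=1$ plus exchangeability is a statement of intent, not a proof; as written, the corollary's stated inequality is not established.

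For comparison, the paper's own proof obtains that constant only by replacing the weighted consensus point with an unweighted average: its first line rewrites $\bbw_n^i-\bar\bbw_n$ as $\tfrac1d\sum_{j\neq i}(\bbw_n^i-\bbw_n^j)$, i.e.\ it treats $\bar\bbw_n$ as the equal-weight mean (and identifies the number of agents with $d$), after which equal-weight Cauchy--Schwarz gives the factor $\tfrac{d-1}{d^2}$ over $d-1$ terms and hence $\left(\tfrac{d-1}{d}\right)^2$ once the i.i.d.\ computation supplies the factor $2\big(\mathbb E[\|\bbw_{in}\|^2]-\mathbb E[\bbw_{in}]\cdot\mathbb E[\bbw_{in}]\big)$. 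So the obstruction you identified is genuine — it is precisely the step the paper's argument glosses over — but to prove the statement as written you would either have to justify that equal-weight reduction or accept the weaker (particle-number-dependent) constant above, which has the correct exponential rate but not the claimed prefactor. Your final step, Jensen's inequality to pass from the second-moment bound to the first-moment bound, is correct and matches the paper.
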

\begin{proof}
{	(i) By Jensen's inequality and Theorem \ref{T3.2}, we have 
	\begin{align*}
	\begin{aligned}
		\mathbb E \big[\|\bbw_n^i-\bar \bbw_n \|^2\big] &=\mathbb E\bigg[ \bigg\|\frac{1}{d}\sum_{j=1,j\neq i}^{d}(\bbw_n^i- \bbw_n^{j}) \bigg\|^2 \bigg]\leq \left(\frac{{d-1}}{d^{2}}\right)\mathbb E \bigg[ \sum_{j=1,j\neq i}^{d} \| \bbw_n^i- \bbw_n^{j} \|^2 \bigg]\\
		&\leq\left(\frac{d-1}{d^{2}}\right)\sum_{j=1,j\neq i}^{d} e^{-nhm}\mathbb E[ \|(\bbw_0^i- \bbw_0^{j}) \|^2]\\ 
		&=\left(\frac{d-1}{d^{2}}\right)\sum_{j=1,j\neq i}^{d} e^{-nhm}\mathbb E \Big[ \|\bbw_0^i\|^2+ \|\bbw_0^{j} \|^2 -2\bbw_0^i\cdot \bbw_0^j\Big]\\
		&=2\left(\frac{d-1}{d}\right)^2 e^{-nhm} \left(\mathbb E\Big[ \|\bbw_{in}\|^2\Big]-\mathbb E\Big[\bbw_{in}\Big]\cdot \mathbb E\Big[\bbw_{in}\Big]\right),
		\end{aligned}
	\end{align*}
	where in the last equation we used the i.i.d. conditions of initial distribution $ \bbw_0^{i} $. 
	
	\vspace{0.2cm}
	
\noindent (ii)~Again we use Jensen's inequality to get the second estimate:
\begin{equation*}
	\left(\mathbb E \|\bbw_n^i-\bar \bbw_n \|\right)^2 \leq \mathbb E \|\bbw_n^i-\bar \bbw_n \|^2\leq 
	2\left(\frac{d-1}{d}\right)^2 e^{-nhm} \left(\mathbb E\Big[ \|\bbw_{in}\|^2\Big]-\mathbb E\Big[\bbw_{in}\Big]\cdot \mathbb E\Big[\bbw_{in}\Big]\right).
\end{equation*}
}
\end{proof}
For next lemma, we set
\begin{align}
\begin{aligned} \label{key}
& \mathrm{Var}(\bbw_{in}):=\mathbb E\Big[ \|\bbw_{in}\|^2\Big]- \Big \|\mathbb E\Big[\bbw_{in}\Big] \Big\|^2, \\
& A_n := \sum_{p=0}^{n} \|\bbw_p^{i}-\bar{\bbw}_p \| \quad \mbox{and} \quad  B_n := \sum_{p=0}^{n} \|({\bbw}_p^i- {\bar {\bbw}}_p)\odot \eta_p \|.
\end{aligned}
\end{align}

\begin{lemma}\label{L3.4}
 The expectation values of sequences $ A_n $ and $ B_{n} $ are uniformly bounded for every $ n $: there exists a positive constant $C$ independent of $n$ such that 
	\[
	 \sup_{1 \leq n < \infty} \mathbb{E}[A_n] \leq C  \quad \mbox{and} \quad  \sup_{1 \leq n < \infty} \mathbb{E}[B_n]\leq C.
	\]
\end{lemma}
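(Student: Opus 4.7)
The plan is to bound each term $\mathbb{E}[\|\bbw_p^i-\bar\bbw_p\|]$ and $\mathbb{E}[\|(\bbw_p^i-\bar\bbw_p)\odot\eta_p\|]$ by a geometrically decaying quantity in $p$, then sum the geometric series. The condition \eqref{L3.3b} ensures $m = 2\lambda-\lambda^2 h-\sigma^2 > 0$, so $e^{-hm/2} < 1$ and the resulting series converges uniformly in $n$.

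For the first sequence, I would apply Corollary \ref{C3.3}(ii) directly: it gives
\[
\mathbb{E}\big[\|\bbw_p^i-\bar\bbw_p\|\big]\;\le\;\tfrac{d-1}{d}\,e^{-phm/2}\sqrt{2\,\mathrm{Var}(\bbw_{in})},
\]
whence $\mathbb{E}[A_n]\le \tfrac{d-1}{d}\sqrt{2\,\mathrm{Var}(\bbw_{in})}\sum_{p=0}^{n}e^{-phm/2}$, and the geometric series on the right is summable to $1/(1-e^{-hm/2})$, independent of $n$.

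For $B_n$, the key observation is that $\eta_p$ is independent of both $\bbw_p^i$ and $\bar\bbw_p$ (since these depend only on $\bbw_0^j$ and $\eta_0,\dots,\eta_{p-1}$). Writing the Hadamard product componentwise,
\[
\big\|(\bbw_p^i-\bar\bbw_p)\odot\eta_p\big\|^2 \;=\; \sum_{l=1}^{d}(w_p^{i,l}-\bar w_p^{l})^2(\eta_p^l)^2,
\]
taking expectations, and using independence together with $\mathbb{E}[(\eta_p^l)^2]=1$, I obtain $\mathbb{E}[\|(\bbw_p^i-\bar\bbw_p)\odot\eta_p\|^2]=\mathbb{E}[\|\bbw_p^i-\bar\bbw_p\|^2]$. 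Then Jensen's inequality and Corollary \ref{C3.3}(i) give
\[
\mathbb{E}\big[\|(\bbw_p^i-\bar\bbw_p)\odot\eta_p\|\big]\;\le\;\sqrt{\mathbb{E}\big[\|\bbw_p^i-\bar\bbw_p\|^2\big]}\;\le\;\tfrac{d-1}{d}\,e^{-phm/2}\sqrt{2\,\mathrm{Var}(\bbw_{in})}.
\]
Summing over $p$ yields the same geometric bound as for $A_n$, so a single constant $C$ depending only on $d$, $\mathrm{Var}(\bbw_{in})$, and $hm$ works for both.

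There is no serious obstacle here beyond bookkeeping: the heavy lifting was done in Theorem \ref{T3.2} and Corollary \ref{C3.3}, and the only subtlety worth flagging is the independence argument for the Hadamard-product term, which relies on the fact that $\bbw_p^i$ and $\bar\bbw_p$ are measurable with respect to the $\sigma$-algebra generated by $\{\bbw_0^j\}_{j}$ and $\{\eta_q\}_{q<p}$, hence independent of $\eta_p$.
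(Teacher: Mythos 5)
Your proposal is correct and follows essentially the same route as the paper: componentwise treatment of the Hadamard product with independence of $\eta_p$ to reduce $\mathbb{E}\big[\|(\bbw_p^i-\bar\bbw_p)\odot\eta_p\|^2\big]$ to $\mathbb{E}\big[\|\bbw_p^i-\bar\bbw_p\|^2\big]$, then Jensen's inequality, Corollary \ref{C3.3}, and summation of the geometric series $\sum_p e^{-phm/2}$ for both $A_n$ and $B_n$. No gaps to report.
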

\begin{proof}
First, we calculate each term in $ B_n $ as
\begin{equation*}
	\|({\bbw}_p^i- {\bar {\bbw}}_p)\odot \eta_p\|^{2}=\sum_{l=1}^{d}\left((w_p^{i,l}-\bar w_p^{l})\eta_p^{l}\right)^2.
\end{equation*}
We take expectation on both sides to find
\begin{align} \label{C-2}
\begin{aligned}
\mathbb E \big[ \|({\bbw}_p^i- {\bar {\bbw}}_p)\odot \eta_p\|^{2} \big]
&=\sum_{l=1}^{d}\mathbb E \left[\left((w_p^{i,l}-\bar w_p^{l})\eta_p^{l}\right)^2\right] =\sum_{l=1}^{d}\mathbb E \left[\left(w_p^{i,l}-\bar w_p^{l}\right)^2\right]\mathbb E\left[(\eta_p^{l})^2\right] \\
&=\sum_{l=1}^{d}\mathbb E \left[\left(w_p^{i,l}-\bar w_p^{l}\right)^2\right] = \mathbb E \left[\|\bbw_p^{i}-\bar \bbw_p\|^2\right].
\end{aligned}
\end{align}
By Jensen's inequality, \eqref{C-2} and Corollary \ref{C3.3}, we have
\begin{align*}
	\mathbb E 	\|({\bbw}_p^i- {\bar {\bbw}}_p)\odot \eta_p\|\leq \sqrt{\mathbb E \big[ \|({\bbw}_p^i- {\bar {\bbw}}_p)\odot \eta_p\|^{2} \big]} \leq \Big(\frac{d-1}{d}\Big) e^{-\frac{phm}{2}}{\sqrt{ 2\mathrm{Var}(\bbw_{in})}}.
\end{align*} 
Now, we take a summation over $p$ to obtain
\begin{align*}
\begin{aligned}
\bbe[B_n] &\leq \Big(\frac{d-1}{d}\Big){\sqrt{ 2\mathrm{Var}(\bbw_{in})}}\sum_{p=0}^n e^{-\frac{phm}{2}} \leq \Big(\frac{d-1}{d}\Big){\sqrt{ 2\mathrm{Var}(\bbw_{in})}}\sum_{p=0}^\infty e^{-\frac{phm}{2}} \\
& \leq \Big(\frac{d-1}{d}\Big){\sqrt{ 2\mathrm{Var}(\bbw_{in})}}\frac{1}{1-e^{-\frac{hm}{2}}}.
\end{aligned}
\end{align*}
This implies the uniform boundedness of $\bbe[B_n]$. Again, we use Corollary \ref{C3.3} again to see the uniform boundedness of $ \mathbb{E}[A_n] $:
\begin{align*}
\bbe[A_n] &= \mathbb E \bigg[ \sum_{p=0}^{n}\|\bbw_{p}^{i}-\bar\bbw_{p}\| \bigg] \leq \Big(\frac{d-1}{d}\Big){\sqrt{ 2\mathrm{Var}(\bbw_{in})}} \sum_{p=0}^{n} e^{-\frac{phm}{2}} \\
	&\leq  \Big(\frac{d-1}{d}\Big){\sqrt{ 2\mathrm{Var}(\bbw_{in})}}\frac{1}{1-e^{-\frac{hm}{2}}}.
\end{align*}
\end{proof}

\begin{theorem} 
\emph{(Emergence of a global consensus state)} \label{T3.5}
Suppose the system parameters satisfy \eqref{L3.3b}, and let $\{\bbw_n^i \}$ be a solution to  \eqref{dCBO2}. Then, there exists a random vector $~\bbw_\infty~ \mbox{such that}~$
\begin{equation*} 
	\begin{aligned}
\lim\limits_{n\to\infty}  \bbw_n^i=\bbw_\infty \quad \mbox{a.s.}, \quad i \in [N].
	\end{aligned}
\end{equation*}
\end{theorem}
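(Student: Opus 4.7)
The plan is to show that each sample path $\{\bbw_n^i\}_{n\ge 0}$ is Cauchy almost surely by proving the absolute summability of the increments, and then to identify a common limit across all agents using Theorem \ref{T3.2}. The starting observation is that $\bbw_n^i\in\mathcal{S}$ for all $n\ge 1$, so $\mathbb{P}_\mathcal{S}[\bbw_n^i]=\bbw_n^i$. Combining this with the non-expansiveness of $\mathbb{P}_\mathcal{S}$ from Lemma \ref{proj} and the explicit form of the prediction step in \eqref{dCBO2} gives
\begin{equation*}
\|\bbw_{n+1}^i-\bbw_n^i\| \;=\; \|\mathbb{P}_\mathcal{S}[\hat{\bbw}_{n+1}^i]-\mathbb{P}_\mathcal{S}[\bbw_n^i]\| \;\le\; \|\hat{\bbw}_{n+1}^i-\bbw_n^i\| \;\le\; \lambda h\,\|\bbw_n^i-\bar{\bbw}_n\| + \sigma\sqrt{h}\,\|(\bbw_n^i-\bar{\bbw}_n)\odot\eta_n\|.
\end{equation*}
This is the crucial point at which the projection step is controlled: the contraction property turns the $\mathcal{S}$-valued increment into a purely Euclidean quantity that is already decomposed into the two pieces making up $A_n$ and $B_n$ in \eqref{key}.

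Summing the inequality above over $n$ and taking expectations, I would invoke Lemma \ref{L3.4} to conclude
\begin{equation*}
\mathbb{E}\!\left[\sum_{n=0}^{\infty}\|\bbw_{n+1}^i-\bbw_n^i\|\right] \;\le\; \lambda h\sup_{n}\mathbb{E}[A_n] + \sigma\sqrt{h}\sup_{n}\mathbb{E}[B_n] \;<\;\infty.
\end{equation*}
By Tonelli's theorem, the nonnegative random series $\sum_{n}\|\bbw_{n+1}^i-\bbw_n^i\|$ is finite almost surely. This forces the partial sums $\bbw_n^i=\bbw_0^i+\sum_{k=0}^{n-1}(\bbw_{k+1}^i-\bbw_k^i)$ to form a Cauchy sequence almost surely; hence there exists a random vector $\bbw_\infty^{(i)}$ with $\bbw_n^i\to\bbw_\infty^{(i)}$ a.s., and since $\mathcal{S}$ is closed, $\bbw_\infty^{(i)}\in\mathcal{S}$ a.s.

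It remains to show the limits coincide across agents, and this is the step that requires the most care. From Theorem \ref{T3.2}, the geometric decay $\mathbb{E}\|\bbw_n^i-\bbw_n^j\|^2\le e^{-nhm}\mathbb{E}\|\bbw_0^i-\bbw_0^j\|^2$ is summable in $n$; thus $\sum_{n}\mathbb{E}\|\bbw_n^i-\bbw_n^j\|^2<\infty$, so by Tonelli again $\|\bbw_n^i-\bbw_n^j\|\to 0$ almost surely. Combining this with the a.s. convergence $\bbw_n^i\to\bbw_\infty^{(i)}$ yields $\bbw_\infty^{(i)}=\bbw_\infty^{(j)}$ a.s. for every pair $i,j\in[N]$, and we may denote this common limit by $\bbw_\infty$.

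The main obstacle I anticipate is really the first step, namely obtaining a pathwise (not just in expectation) control on the increments. Because the noise term $(\bbw_n^i-\bar{\bbw}_n)\odot\eta_n$ is not a martingale difference that vanishes in expectation after summation, one cannot rely on martingale convergence directly; the argument must pass through absolute summability via Lemma \ref{L3.4} and the contraction from Lemma \ref{proj}. Everything else is then routine measure-theoretic bookkeeping.
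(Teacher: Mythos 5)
Your proof is correct and follows the same skeleton as the paper's: control the increment $\|\bbw_{n+1}^i-\bbw_n^i\|$ by $\|\hat\bbw_{n+1}^i-\bbw_n^i\|$ via the contraction of $\mathbb{P}_{\mathcal S}$ (Lemma \ref{proj}), reduce to the quantities $A_n,B_n$ of \eqref{key}, invoke Lemma \ref{L3.4}, conclude the paths are a.s.\ Cauchy, and then merge the per-agent limits using Theorem \ref{T3.2}. The one place you genuinely diverge is the passage from $\sup_n\mathbb{E}[A_n],\sup_n\mathbb{E}[B_n]<\infty$ to an almost-sure statement: the paper asserts that $A_n$ and $B_n$ are uniformly bounded submartingales and applies Doob's martingale convergence theorem, whereas you simply observe that $\sum_n\|\bbw_{n+1}^i-\bbw_n^i\|$ is a nonnegative series with finite expectation (monotone convergence/Tonelli), hence a.s.\ finite. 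Your route is arguably cleaner: since $A_n$ and $B_n$ are nondecreasing, monotone convergence already suffices, and you avoid having to verify the submartingale structure (adaptedness of $B_n$ to a filtration containing $\eta_n$) that the paper leaves unexamined. You also spell out the identification of the common limit --- summability of $\mathbb{E}\|\bbw_n^i-\bbw_n^j\|^2$ from the geometric decay, hence $\|\bbw_n^i-\bbw_n^j\|\to 0$ a.s.\ over the finitely many pairs --- which the paper compresses into a single sentence. Note that, exactly as in the paper, your argument implicitly uses the finite-variance i.i.d.\ initial-data hypothesis \eqref{New-1} (through Lemma \ref{L3.4} and through $\mathbb{E}\|\bbw_0^i-\bbw_0^j\|^2<\infty$), and the bound $\|\bbw_{n+1}^i-\bbw_n^i\|\le\|\hat\bbw_{n+1}^i-\bbw_n^i\|$ presumes $\bbw_n^i\in\mathcal S$, i.e.\ it is exact from $n\ge 1$ on; neither point is a defect relative to the paper, which makes the same tacit assumptions.
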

\begin{proof}
We prove this theorem by showing that ${\bbw}_n^i $ is a Cauchy sequence for a.s. $ \omega \in \Omega $ in the compact subset of $ \mathcal{S} $. Note that 
	\begin{equation}\label{3.15}
		\begin{aligned}
			\|\bbw_m^{i}-\bbw_n^{i} &\|\leq \sum_{p=n}^{m-1} \|\bbw_{p+1}^{i}-\bbw_p^{i} \|\leq \sum_{p=n}^{m-1} \|\hat{\bbw}_{p+1}^{i}-\bbw_p^{i} \| \\
			&\leq \sum_{p=n}^{m-1}\lambda h \|\bbw_p^{i}-\bar{\bbw}_p \|+\sum_{p=n}^{m-1}\sigma\sqrt h \|({\bbw}_p^i- {\bar {\bbw}}_p)\odot \eta_p \| \\
			&=\lambda h(A_{m-1}-A_{n-1})+\sigma\sqrt{h}(B_{m-1}-B_{n-1}).
		\end{aligned}
	\end{equation}

By Lemma \ref{L3.4}, it is clear that $ A_n$ and $ B_n$ are uniformly bounded submartingale.
 Hence, by Doob's martingale convergence theorem, 
 \[ A_n\to A_{\infty}  \quad \mbox{and} \quad  B_n\to B_{\infty} \quad \mbox{a.s.}~~n \to \infty, \]
 for some $A_{\infty}, B_{\infty}$. Thus,
  \[  (A_n)  \quad \mbox{and} \quad  (B_n)~\mbox{are Cauchy for a.s. $\omega$}. \]
   Therefore, for a given $ \varepsilon $, we can choose a sufficiently large $ N $ such that
\begin{equation}\label{NE-1}
m, n > N \quad \Longrightarrow \quad \max\big\{ |A_m-A_n|, ~|B_m-B_n| \big\} \leq \dfrac{\varepsilon}{2} \quad \mbox{a.s.} ~\omega. 
\end{equation}
Then, \eqref{3.15} and \eqref{NE-1} imply that $\{ \bbw_n^{i} \}$ is Cauchy. Therefore, there exists ${\bbw}_\infty ^i $ such that $ \lim_{n\to\infty} \bbw_n^{i}=\bbw_{\infty }^{i} $ a.s.. Finally, we use the global consensus result to show that $ w_n^i $ converges to the $ \bbw_\infty $ common consensus state a.s.. 
\end{proof}

\begin{theorem}[Error estimate] \label{T3.6}
Assume that the following conditions hold. 
\begin{enumerate}
\item
Suppose that the objective function $L = L(\bbw)$ is bounded { and Lipschitz continuous with Lipschitz constant  $ C_L$ and lower bound $L^*$}:
\[ |L(\bbw_1) - L(\bbw_2)| \leq C_L |\bbw_1 - \bbw_2|, \quad \forall~\bbw_1, \bbw_2 \in {\mathbb R}, \quad \mbox{and} \quad \inf_{\bbw \in {\mathbb R}}  L(\bbw) = L^*. \]
\item
Suppose system parameters, initial data satisfy
	\begin{align*}
	& 2\lambda>\sigma^2, \quad \bbw_0^i: i.i.d., \quad \bbw_0^i \sim \bbw_{in}, \\
	& (1-\varepsilon)\mathbb E \Big[ e^{-\beta L(\bbw_{in})} \Big ] \geq \frac{\beta C_Le^{-\beta L^*}(d-1)(\lambda h + \sigma \sqrt{h})}{d(1-e^{-\frac{hm}{2}})}\sqrt{2\mathrm{Var}(\bbw_{in})}
	\end{align*}
	for some $0<\varepsilon<1$ and a random variable $\bbw_{in}$ appearing in \eqref{New-1}. 
	\end{enumerate}
	and let $\{\bbw_n^i \}$ be a solution process to \eqref{dCBO2}. Then, there exists a function $E(\beta)$  such that 
	\[
	\lim\limits_{\beta\to\infty}E(\beta)=0 \quad \mbox{and} \quad \operatorname{essinf}_{\omega\in\Omega} L({\bbw}_\infty(\omega))\leq \operatorname{essinf}_{\omega\in\Omega} L({\bbw}_{in}(\omega))+E(\beta).
	\]
	{ In particular, if the global minimizer $\bbw^*$ of $L$ is contained in the support of $\operatorname{law}({\bbw}_{in})$, then}
	\[
	\operatorname{essinf}_{\omega\in\Omega} L({\bbw}_\infty(\omega))\leq L^*+E(\beta).
	\]
\end{theorem}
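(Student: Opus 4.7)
The strategy is to control the unnormalized weighted mass
\[ M_n(\beta) := \frac{1}{N}\sum_{l=1}^{N} e^{-\beta L(\bbw^l_n)}, \]
showing that $\mathbb{E}[M_n]$ loses at most a fraction $(1-\varepsilon)$ of its initial value over the entire iteration, and then to invoke the classical Laplace principle. The rate constraint in hypothesis~(2) is tailored exactly for this purpose, and the consensus result of Theorem~\ref{T3.5} together with continuity of $L$ identifies the pointwise limit $M_\infty = e^{-\beta L(\bbw_\infty)}$ a.s. (all particles reach the common state $\bbw_\infty$).

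To bound the one-step change $\mathbb{E}|M_{n+1}(\beta)-M_n(\beta)|$, I would combine three ingredients: (i) the mean value theorem, the Lipschitz assumption and $L\geq L^*$ give $|e^{-\beta L(\bbw_1)}-e^{-\beta L(\bbw_2)}|\leq \beta C_L e^{-\beta L^*}\|\bbw_1-\bbw_2\|$; (ii) Lemma~\ref{proj} applied to $\|\bbw^l_{n+1}-\bbw^l_n\|\leq \|\hat{\bbw}^l_{n+1}-\bbw^l_n\|$ combined with the dynamics \eqref{dCBO2} yields $\|\bbw^l_{n+1}-\bbw^l_n\|\leq \lambda h\,\|\bbw^l_n-\bar{\bbw}_n\|+\sigma\sqrt{h}\,\|(\bbw^l_n-\bar{\bbw}_n)\odot\eta_n\|$; and (iii) Corollary~\ref{C3.3} together with the Jensen bound $\mathbb{E}\|(\bbw^l_n-\bar{\bbw}_n)\odot\eta_n\|\leq \tfrac{d-1}{d}e^{-nhm/2}\sqrt{2\mathrm{Var}(\bbw_{in})}$ (already implicit in the proof of Lemma~\ref{L3.4}) delivers geometric-in-$n$ decay. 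Summing the geometric series produces precisely the RHS of hypothesis~(2):
\[ \sum_{n=0}^{\infty}\mathbb{E}\,|M_{n+1}(\beta)-M_n(\beta)|\leq \frac{\beta C_L e^{-\beta L^*}(\lambda h+\sigma\sqrt{h})(d-1)}{d\,(1-e^{-hm/2})}\sqrt{2\mathrm{Var}(\bbw_{in})}\leq (1-\varepsilon)\,\mathbb{E}[e^{-\beta L(\bbw_{in})}]. \]

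A telescoping identity and dominated convergence (with constant dominant $e^{-\beta L^*}$) then give $\mathbb{E}[e^{-\beta L(\bbw_\infty)}]\geq \mathbb{E}[e^{-\beta L(\bbw_{in})}]-(1-\varepsilon)\mathbb{E}[e^{-\beta L(\bbw_{in})}]=\varepsilon\,\mathbb{E}[e^{-\beta L(\bbw_{in})}]$. Applying $-\tfrac{1}{\beta}\log(\cdot)$ and using Jensen's inequality in the form $\operatorname{essinf}_{\omega\in\Omega}L(\bbw_\infty(\omega))\leq -\tfrac{1}{\beta}\log\mathbb{E}[e^{-\beta L(\bbw_\infty)}]$ produces
\[ \operatorname{essinf}_{\omega\in\Omega}L(\bbw_\infty(\omega))\leq \operatorname{essinf}_{\omega\in\Omega}L(\bbw_{in}(\omega))+E(\beta),\quad E(\beta):=\bigg(-\frac{1}{\beta}\log\mathbb{E}[e^{-\beta L(\bbw_{in})}]-\operatorname{essinf}_{\omega\in\Omega}L(\bbw_{in}(\omega))\bigg)-\frac{\log\varepsilon}{\beta}, \]
with $E(\beta)\to 0$ as $\beta\to\infty$ (first term by the Laplace principle, second term trivially). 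For the second conclusion, $\bbw^*\in\operatorname{supp}(\operatorname{law}(\bbw_{in}))$ and continuity of $L$ force $\operatorname{essinf}_{\omega\in\Omega}L(\bbw_{in}(\omega))=L^*$, which closes the argument. The main technical obstacle is the bookkeeping in step~two: Lemma~\ref{proj} must be used \emph{before} the Lipschitz bound so that the nonlinear projection does not destroy the explicit geometric decay from Corollary~\ref{C3.3}, and the resulting constants must match exactly the RHS of hypothesis~(2).
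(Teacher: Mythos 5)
Your proposal is correct and follows essentially the same route as the paper: bound the one-step change of the weighted mass $\frac{1}{N}\sum_i e^{-\beta L(\bbw_n^i)}$ by $\beta C_L e^{-\beta L^*}\|\bbw_{n+1}^i-\bbw_n^i\|$, use Lemma \ref{proj} and the dynamics \eqref{dCBO2} together with Corollary \ref{C3.3} to sum a geometric series matching hypothesis (2), pass to the limit to get $\mathbb{E}[e^{-\beta L(\bbw_\infty)}]\geq \varepsilon\,\mathbb{E}[e^{-\beta L(\bbw_{in})}]$, and conclude via $-\frac{1}{\beta}\log(\cdot)$ and the Laplace principle. The only difference is cosmetic: you apply the Lipschitz/mean-value bound directly to $\bbw\mapsto e^{-\beta L(\bbw)}$ and sum absolute increments, whereas the paper uses the one-sided inequality $e^x-1\geq x$ before invoking the Lipschitz bound, with identical constants in the end.
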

\begin{proof}
 {We slightly improve arguments in the proof of Theorem 3.2 in \cite{H-J-K} by replacing $\mathcal C^2$-regularity assumption of the objective function $L$} by Lipschitz continuity. More precisely, we proceed our proof by avoiding estimates for $\nabla^2 L$. Since overall proof can be found in \cite{H-J-K}, we focus on the key ingredient part: since $e^x-1\geq x$ for all $x\in\bbr$, we have
	\begin{align}
		\begin{aligned} \label{D-13a}
			&\frac{1}{N}\sum_{i=1}^N  e^{-\beta L(\bbw_{n+1}^i)} -\frac{1}{N}\sum_{i=1}^N  e^{-\beta L(\bbw_n^i)} =\frac{1}{N}\sum_{i=1}^N  e^{-\beta L(\bbw_{n}^i)}(e^{-\beta(L(\bbw^{i}_{n+1})-L(\bbw_n^i))}-1)\\
			& \hspace{.5cm} \geq \frac{1}{N}\sum_{i=1}^N  e^{-\beta L(\bbw_{n}^i)}(-\beta)(L(\bbw_{n+1}^{i})-L(\bbw_{n}^{i})) \\
			&\hspace{.5cm} =-\frac{\beta}{N}\sum_{i=1}^Ne^{-\beta L(\bbw_{n}^i)} \Big( \nabla L (c\bbw_{n+1}^{i}+(1-c)\bbw_{n}^{i}) \Big) \cdot (\bbw_{n+1}^{i}-\bbw_{n}^{i}) \\
		& \hspace{.5cm} \geq-\frac{\beta}{N}\sum_{i=1}^Ne^{-\beta L(\bbw_{n}^i)}C_{L} \|\bbw_{n+1}^{i}-\bbw_{n}^{i} \| \\
		&\hspace{.5cm} \geq-\frac{\beta C_Le^{-\beta L^*}}{N}\sum_{i=1}^N \|\bbw_{n+1}^{i}-\bbw_{n}^{i} \|
			\geq -\frac{\beta C_Le^{-\beta L^*}}{N}\sum_{i=1}^N \| \hat{\bbw}_{n+1}^{i}-\bbw_{n}^{i} \|,
		\end{aligned}
	\end{align}
{ where the last inequality follows from Lemma \ref{proj} since $ \bbw_{n+1}^{i} $ is the projection of $ \hat{\bbw}_{n+1}^{i} $ and $ \bbw_n^i $ is the projection of itself.}
On the other hand, we use \eqref{C-2} and Corollary \ref{C3.3} to see
\begin{equation} \label{C-3}
	\begin{aligned}
	\bbe \|&\hat{\bbw}_{n+1}^{i}-\bbw_{n}^{i} \|
		{=\bbe \| -\lambda h({\bbw}^i_n - {\bar {\bbw}}_n)
		+\sigma\sqrt h ({\bbw}_n^i- {\bar {\bbw}}_n)\odot \eta_n\|} \\
	&	{\leq\lambda h\bbe \| \bbw_n^{i}-\bar{\bbw}_n \| +\sigma\sqrt h\bbe \|({\bbw}_n^i- {\bar {\bbw}}_n)\odot \eta_n \| } \\
	&\leq  \lambda h \bbe \| \bbw_n^{i}-\bar{\bbw}_n \|+\sigma\sqrt h\sqrt{\bbe\Big[ \|{\bbw}_n^i- {\bar {\bbw}}_n \|^2\Big]}\\
	&=(\lambda h + \sigma \sqrt{h})\Big(\frac{d-1}{d}\Big)e^{-\frac{nhm}{2}}{\sqrt{ 2\mathrm{Var}(\bbw_{in})}}.
	\end{aligned}
\end{equation}
We sum up \eqref{D-13a} over $n$, and apply expectation on the resulting relation to get 
	\begin{align*}
	\begin{aligned} 
		&\bbe \bigg[\frac{1}{N}\sum_{i=1}^N  e^{-\beta L(\bbw_n^i)} \bigg]
		\geq \bbe \bigg[\frac{1}{N}\sum_{i=1}^N  e^{-\beta L(\bbw_0^i)} \bigg]  -\frac{\beta C_Le^{-\beta L^*}}{N}\sum_{p=0}^{n-1}\sum_{i=1}^N\bbe \|\hat{\bbw}_{p+1}^{i}-\bbw_{p}^{i} \| \\
		&\hspace{1cm} \geq \bbe [  e^{-\beta L(\bbw_{in})}] -\frac{\beta C_Le^{-\beta L^*}(d-1)}{Nd}(\lambda h + \sigma \sqrt{h})\sum_{i=1}^N\frac{1-e^{-\frac{nhm}{2}}}{1-e^{-\frac{hm}{2}}}{\sqrt{ 2\mathrm{Var}(\bbw_{in})}}\\
		&\hspace{1cm} \geq \bbe [  e^{-\beta L(\bbw_{in})}] -\beta C_Le^{-\beta L^*}(\lambda h + \sigma \sqrt{h})\left(\frac{d-1}{d}\right)\frac{1-e^{-\frac{nhm}{2}}}{1-e^{-\frac{hm}{2}}}\sqrt{2\mathrm{Var}(\bbw_{in})},
	\end{aligned}
\end{align*}
where we used \eqref{C-3} in the second inequality. \newline

{ As $ n \to \infty  $, we have 
\begin{equation*} 
	\begin{aligned}
			\bbe \big[ e^{-\beta L(\bbw_\infty )} \big]
			&\geq \bbe [  e^{-\beta L(\bbw_{in})}] -\frac{\beta C_Le^{-\beta L^*}(d-1)(\lambda h + \sigma \sqrt{h})}{d(1-e^{-\frac{hm}{2}})}\sqrt{2\mathrm{Var}(\bbw_{in})} \geq \varepsilon\bbe e^{-\beta L(\bbw_{in})}.
	\end{aligned}
\end{equation*}
We take the logarithm on both sides of the above inequality to get 

\begin{align*}
	-\frac{1}{\beta}\log \mathbb E \big[ e^{-\beta L(\bbw_{\infty})} \big] \leq-\frac{1}{\beta}\log \mathbb E \big[ e^{-\beta L(\bbw_{in})} \big] -\frac{1}{\beta}\log\varepsilon.
\end{align*}
We combine the above relation and 
\begin{align*}
	\operatorname{essinf}_{\omega\in\Omega} L(\bbw_\infty)&=-\frac{1}{\beta}\log   e^{-\beta \operatorname{essinf}_{\omega\in\Omega} L(\bbw_\infty)} \leq-\frac{1}{\beta}\log \mathbb E \big[ e^{-\beta L(\bbw_{\infty})} \big] 
\end{align*}
 to derive
 \begin{equation*}
 		\operatorname{essinf}_{\omega\in\Omega} L(\bbw_\infty)  \leq-\frac{1}{\beta}\log \mathbb E \big[ e^{-\beta L(\bbw_{in})} \big] -\frac{1}{\beta}\log\varepsilon.
 \end{equation*}

\noindent Now we use Laplace's principle:
\begin{equation*}\label{key}
\lim_{\beta \to \infty}	-\frac{1}{\beta}\log \mathbb E \big[ e^{-\beta L(\bbw_{in})} \big] = L^*.
\end{equation*}
So if we define 
\[ E(\beta):= -\frac{1}{\beta}\log \mathbb E \big[ e^{-\beta L(\bbw_{in})} \big] - L^* -\frac{1}{\beta}\log \mathbb \varepsilon, \]
we have 
\begin{equation*}
	\operatorname{essinf}_{\omega\in\Omega} L(\bbw_\infty)  \leq L^* + E(\beta),
\end{equation*}
where $ \lim _{\beta \to \infty} E(\beta)=0 $ which yields our main result. }
\end{proof}

\hspace{0.2cm}

{
\begin{remark} 
1. Below, we mention advantages and disadvantages of the results from this paper
 compared to the result of \cite{H-J-K} as follows. 
	\begin{itemize}	
	\item The regularity condition of the objective function $ L = L(\bbw) $ is relaxed drastically from the bounded $\mathcal{C}^2 $-regularity to  Lipschitz regularity so that the result of Theorem \ref{T3.6} can be applied to a more broad class of objective functions.
	\vspace{0.1cm}
	\item The convergence results holds for any closed convex set $ \mathcal{S} $, whereas the result of \cite{H-J-K} deals with unconstrained problem. 
	\vspace{0.1cm}
	\item Due to relaxed regularity condition on $L$, we can only show the zero convergence of the error function $ E(\beta) $ without any decay rates. Note that bounded $C^2$-regularity assumption of $L$ yields the decay estimate $E(\beta) = {\mathcal O}(\frac{1}{\beta})$ as $\beta \to \infty$. 
	\end{itemize}

\vspace{0.2cm}

\noindent 2. We also further extended the result of [10] to more general setting using $ \mathrm{Var}(\bbw_{in}) $ instead of $ \max_{i,j} \bbe\|\bbw_0^i-\bbw_0^j\|^2 $ in \cite{H-J-K}. This observation allows more general initial distribution such as Gaussian distribution which has an infinite support. 
\end{remark}
}
\section{Financial Application with Numerical Simulation} \label{sec:4}
\setcounter{equation}{0}
In this section, we apply the suggested predictor-corrector type CBO algorithm to the optimal portfolio selection problem.
\subsection{The Markowitz problem}
The fundamental theory of portfolio optimization is based on Markowitz's seminal work \cite{M}, 
which selects the weight of investors' investment in risky assets based on mean-variance analysis. In what follows, we discuss Markowitz's modern portfolio theory (MPT) \cite{M}
for choosing the optimal combination of a risk-free and multiple risky assets to yield maximal portfolio returns given portfolio's risk. 

Consider a financial market in which one risk-free and $N-1$ risky assets are traded continuously in a finite horizon $[0,T]$. 
The risk-free bond is evaluated by its price $P_t$ for $t \ge 0$, and it evolves according to the following equation: 
\begin{align*} 
dP_t = r_f P_t dt, \quad \mbox{for} \quad t \in (0, T],
\end{align*} 
where $r_f$ is the risk-free interest rate, whereas the risky security $i$ follows the geometric Brownian motion such as 
\begin{align*}
dS_{t}^{i} = \mu^i S_{t}^i dt + \sigma^i S_{t}^i dB^i_t, \quad \mbox{for} \quad t \in (0, T], 
\end{align*} 
where $\mu^i$,$\sigma^i$ and $B^i_t$ are the drift or mean rate of return,  volatility, and a standard Brownian motion of the risky security $i$, respectively.
{Here, $ S_t^i $ denotes the price of a risky security $ i $ at $ t $.}

Suppose that the initial capital $\mathcal{W}_0$ is given at the beginning of the whole investment period. The capital would be invested in a risk-free bond and $N-1$ risky securities at time $t = 0$. 
Let $w_{t}^i$ be the relative amount invested in a security $i$, where $\sum_{i=1}^{n} w_{t}^i$ = 1. Here, $w_t^i$ is considered as a \textit{weight} of $\mathcal{W}_0$ invested in an asset $i$ for $i = 1, \ldots, N$. 

For example, if an investor has one risk-free and one risky assets in his portfolio, 
the wealth process $\mathcal{W}_t$ corresponds to the universal portfolio $\bbw = (w^{1}, w^{2})$ satisfying   
\begin{align} \label{wealthprocess}
d\mathcal{W}_t = \left[w^1_t r_f +w^2_t\left(\mu - r_f\right)\right] \mathcal{W}_t dt + \mathcal{W}_t w^2_t \sigma dB_t, \quad \mbox{for} \quad t \in (0, T],
\end{align} 
{ where $ \mu $ and $ \sigma $ denote a risky asset's expected return and volatility (risk), which are fixed over the entire investment time horizon from $ 0 $ to $ T $. Thus, an investor's expected value of wealth evolves according to the portfolio's expected return (the first term on the right hand side of 4.1) at every t. The portfolio's wealth value also depends on the risky asset's volatility (the second term on the right hand side of equation \ref{wealthprocess}) along with the Brownian motion.}

Let $\bbw^0 = (w_{0}^1, \ldots, w_{0}^N)$ be the initial weights invested in the $N$ assets. 
Suppose that we rebalance our portfolio at times $t = 1, \ldots, T-1$ 
without injecting additional cash into it. At each period $t$, 
we make a portfolio rebalancing decision about 
redistributing the current wealth $\mathcal{W}_t$ among $N$ assets. 
Optimization is performed over all implementable and admissible policies of the optimal weights $w_{t}^i$ 
given realized asset returns $\mathbf{r}_t = (r_{t}^1, \ldots, r_{t}^{N})^{\top}$ for each period $t = 1, \ldots, T$,  
forming a vector-valued random process $\mathbf{r} = (\mathbf{r}_{1}, \ldots, \mathbf{r}_{T})$. 

Given $\mathcal{F}_{t-1}$ denoting all previous historical data until $t-1$,  
$\mathbf{r}_t$ is a multivariate stochastic process of asset returns with conditional mean and covariance matrix denoted as 
\begin{align*} 
	\bm{\mu} = ~\mathbb{E}\left[\mathbf{r}_t | \mathcal{F}_{t-1}\right], \quad
	\mathbf{\Sigma} = \mathrm{Cov}\left[\mathbf{r}_t | \mathcal{F}_{t-1}\right]. 
\end{align*} 
At time $t$, the portfolio weight $\bbw_t = (w_{t}^{1}, \ldots, w_{t}^{d})$ is a function of the available information such as $\bbw_t = \bbw_t(\mathbf{r}_1, \ldots, \mathbf{r}_t)$.  
Thus, by continuously updating, an investor chooses the universally optimal portfolio, 
$\bbw^*= (w^{*,1}, \ldots, w^{*,d})$ for $t = 0,\ldots, T$ 
{given the following constraints: }
\begin{gather*}
	{\mathcal{S} :=\Big\{ \bbw = (w^1, \cdots, w^d)  \in \mathbb{R}^d~ \Big|~ \sum_{i=1}^{d}  w^i=1, \quad w^i\geq 0 \quad  i=1,\cdots, d\Big\}.}
\end{gather*}
The constraint explains the sum of portfolio weights should be one, and all portfolio weights are between 0 and 1. 

Given the evolutionary process of the wealth in \eqref{wealthprocess}, {an investor would choose his/her portfolio $ \bf w $. Now,} we introduce the objective function of an investor with portfolio weight $\bbw$. 
His objective is to maximize the expected utility of the terminal wealth $\mathcal{W}_T$ given the portfolio's variance so that 
\begin{align} \label{obj_value}
\underset{\bbw\in\mathcal S}{\mbox{max }} ~\mathbb{E} \left[U\left(\mathcal{W}_T\right)\right],
\end{align} 
where $\mathcal{S}$ is the feasible set of a portfolio weight vector $\bbw$ and $\mathbb{E}$ is the expectation operator. Here, we assume that investor's utility function $U(\cdot)$ is a Constant Relative Risk-Averse (CRRA) type utility function:
\[ U'(\cdot)>0 \quad \mbox{and} \quad U''(\cdot)<0. \]
{For example, one of the most commonly used CRRA-type utility functions is a log function.}
By the properties of the CRRA utility and statistical properties of securities given the investing time period, Markowitz \cite{M} has solved the optimal portfolio problem of \eqref{obj_value} by the mean-variance analysis, which maximizes the expected portfolio returns $ \bbw^\top\mu-r_f $ given the fixed volatility or risk of the portfolio at every $t$. {Given the maximum expected return relative to risk, a portfolio-investor would achieve the maximum level of wealth at the terminal period $ T $.}

 The ratio of this expected return relative to a risk
{{\[ L(\bbw):=\frac{\mathbf{w}^\top{\bm\mu}}{\sqrt{\mathbf{w}^\top\mathbf{\Sigma}\mathbf{w}}} \]}
is often called the Sharpe ratio,
where $\bbw^\top \bm{\mu}$ and $\sqrt{\bbw^\top\mathbf{\Sigma}\bbw}$ denote risky portfolio's expected return and  standard deviation of the portfolio $\bbw$ with multi-asset returns' covariance ${\bf\Sigma}$, respectively. This Sharpe ratio is the most commonly used objective function in the portfolio optimization literature.
 }

\subsection{Numerical simulations}
In this subsection, we provide numerical simulation results. For this, we choose six assets: Apple Inc.(APPL), Microsoft Inc.(MSFT), Starbucks Inc.(SBUX), Tesla Inc.(TSLA),  
German Deutsche Bank(DB), and gold(GLD) as a commodity via an exchange-traded fund(ETF). 
We use the daily closing prices over the period from Jan. 2019 to Nov. 2020. 
Figure \ref{fig:price} shows the stock prices and returns over the sample period. 

\begin{figure}[htbp]		
	\subfigure[Stock price evolution]{\label{fig:price1}
	\includegraphics[scale=0.8]{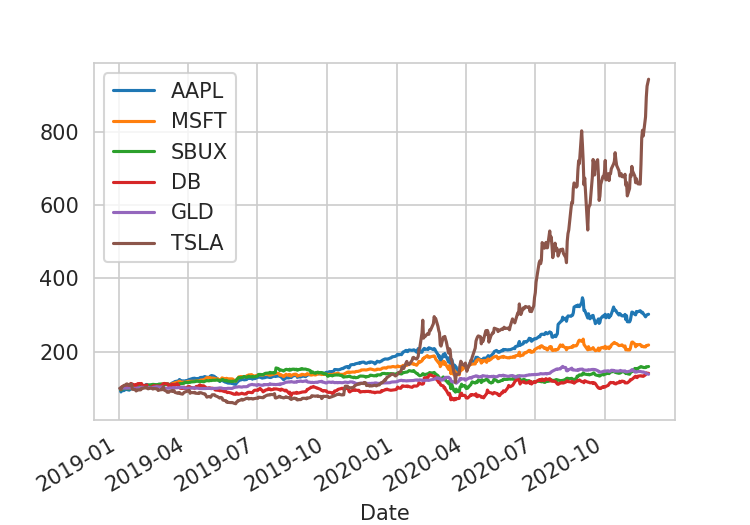}}
	\subfigure[Stock price returns]{\label{fig:return2}
	\includegraphics[scale=0.8]{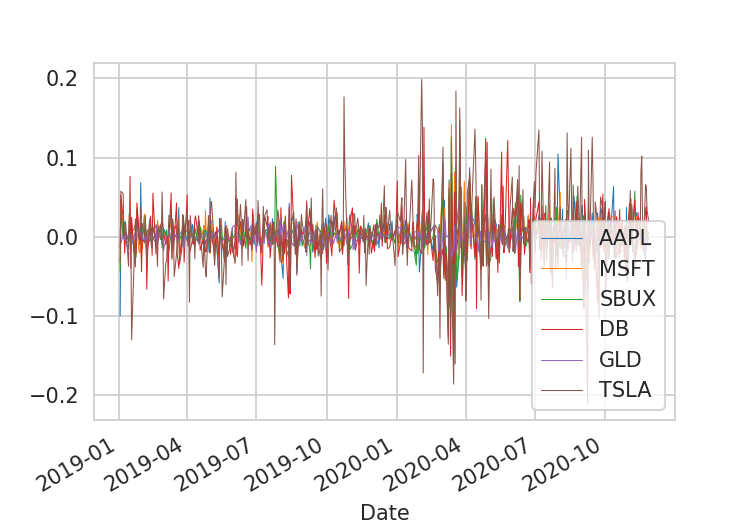}}
	\caption{Sample path of stock prices of APPLE, MSFT, SBUX, DB, GLD and TSLA.  Source: Yahoo Finance. 
	All initial prices are normalized to 100.} 
	\label{fig:price}	
\end{figure} 

From stock price data, we get daily returns of securities by taking the log differences of the stock prices. { The returns are continuously compounding and stationary} such as 
\begin{align*} 
r_{t}^i &= \log \frac{S_t^i}{S_{t-1}^i}.
\end{align*} 
The time index $t$ can denote any arbitrary period and we choose $\Delta t = 1$ day. 
In Figure \ref{fig:opt1} and \ref{fig:opt2}, we illustrate the results of the Monte Carlo simulation to generate 
random portfolio weight vectors on a large scale given $r_f = 0$. 
The initial configuration of $ {\bbw}_0 $ is sampled via Gaussian distribution and then projected to $ \mathcal{S}$ so that  $ \sum_{n} w_0^{i,n} =1  $ and $ w_0^{i,n}>0 $ for all $ i $. Parameter values are: $\Delta t = 0.01,\quad N=100, \quad \lambda=0.5, \quad \sigma=1$. 
{ In our experiment, we used Sharpe ratio as our objective function.}

For a fixed risk level, we may find multiple portfolios that all show different returns. 
In Figure \ref{fig:opt1} and \ref{fig:opt2}, scattered plots indicate alternative portfolio points of six assets 
over the risk-return space. 
Among admissible points in Figure \ref{fig:opt1}, 
the optimal risky asset portfolios are on the envelope because 
an investor aims to achieve the highest return given a fixed risk level (the minimum risk). 
The envelope is called the efficient frontier. 

\begin{figure}[htbp]
	\centering	
	\subfigure[Zoom-in Efficient Frontier by Sequential Least Squares]{\label{fig:opt1}
	\includegraphics[scale=0.7]{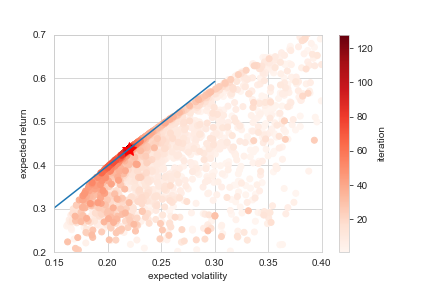}}
	\subfigure[Optimal Portfolio Selection by CBO Algorithm]{\label{fig:opt2}
	\includegraphics[scale=0.7]{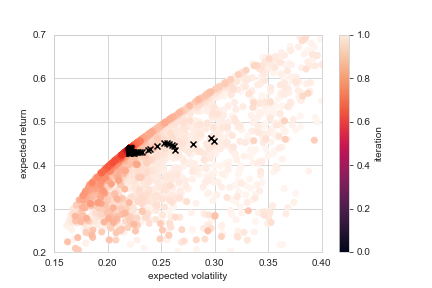}}
\caption{Optimal Portfolio Selection Simulation Results on the two-step CBO algorithm. The initial configuration of $ {\bbw}_0 $ is sampled by Gaussian distribution after projection to $ \mathcal{S}$ so that  $ \sum_{n} w_0^{i,n} =1  $ and $ w_0^{i,n}>0 $ for all $ i $. Parameter values are: $\Delta t = 0.01,\quad N=100, \quad \lambda=0.5, \quad \sigma=1$. }
\end{figure}

\begin{figure}[ht] 
	\centering  
	\includegraphics[scale=0.7]{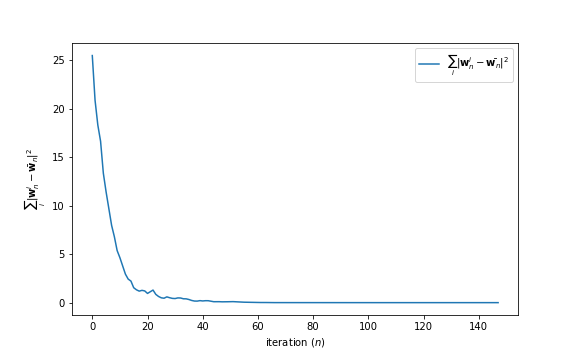}
	\caption{Time evolution of errors}\label{time_err}
\end{figure}

In addition to risky securities like stocks and commodities, one may add risk-free assets to his/her portfolio. 
Given the efficient frontier of risky asset bundle, an investor may choose his optimal weight on the risk-free asset relative to the risky asset bundle. 
How much weight to be invested in risk-free asset depends on his/her attitude toward risks. 
We describe the linear combination of the risk-free asset and risky bundle by the Capital Market Line (CML) in Figure \ref{fig:opt1}, 
a tangent line of the efficient frontier going through the risk-free rate of $r_f = 0.01$. 
Thus, any point on the CML indicates the portfolio that can achieve the maximum Sharpe ratio.   
If he is extremely risk-averse, then his optimal portfolio is found on $r_f$, whereas 
he could invest 100\% of his funds on the optimal risk-bundle lying on the efficient frontier, and such point is described by a red star in Figure \ref{fig:opt1}. 
A red star is found by the Sequential Least Squares Programming (SLSQP) optimizer using the Han-Powell quasi-Newton method with a BFGS update.

Finally, in Figure \ref{fig:opt2}, black crossed marks trace out the evolutionary path of the solutions' center of mass by the CBO algorithm \eqref{dCBO2}. The path shows that center of mass of $\{{\bbw}_t^{i}\}$ converge to ${\bbw}_\infty$ near ${\bbw}^* $ by iteration. We provide a zoomed-in picture on the global consensus with the red star obtained by the SLSQP optimizer. Figure \ref{time_err} shows the time evolution of $L_2$ error. As shown in Figure \ref{time_err}, the error decays over time.
Simulation results show that the predictor-corrector CBO algorithm successfully finds the optimal portfolio weight.}

\end{document}